\def\BibTeX{{\rm B\kern-.05em{\sc i\kern-.025em b}\kern-.08em
    T\kern-.1667em\lower.7ex\hbox{E}\kern-.125emX}}
\theoremstyle{definition}
\newtheorem{definition}{Definition}
\newtheorem{assumption}{Assumption}
\newtheorem{rules}{Rule}
\newtheorem{proposition}{Proposition}
\newtheorem{modified_rules}{Rule}
\begin{document}

\title{A Fault-Tolerant Distributed Termination Method for Distributed Optimization Algorithms}

\author{Mohannad Alkhraijah
and Daniel K. Molzahn
 }

\maketitle

\begin{abstract}
This paper proposes a fully distributed termination method for distributed optimization algorithms solved by multiple agents. The proposed method guarantees terminating a distributed optimization algorithm after satisfying the global termination criterion using information from local computations and neighboring agents. The proposed method requires additional iterations after satisfying the global terminating criterion to communicate the termination status. The number of additional iterations is bounded by the diameter of the communication network. This paper also proposes a fault-tolerant extension of this termination method that prevents early termination due to faulty agents or communication errors. We provide a proof of the method's correctness and demonstrate the proposed method by solving the optimal power flow problem for electric power grids using the alternating direction method of multipliers.
\end{abstract}

\begin{IEEEkeywords}
Distributed Optimization, Fault Tolerance, Termination Criteria.
\end{IEEEkeywords}

\section{Introduction}
Distributed optimization algorithms allow multiple computing agents to collaboratively solve large-scale optimization problems by solving smaller subproblems. Distributed optimization algorithms are typically iterative, where agents solve the subproblems in parallel, share their solutions at each iteration, and terminate after finding a solution that satisfies predefined termination criteria. The potential advantages of solving large-scale optimization problems using distributed algorithms include improving scalability by allowing parallel computation, reliability by eliminating a single point of failure, and privacy by reducing the amount of shared data~\cite{SYS-020}.

The advantages of using distributed optimization algorithms motivate their application in electric power systems~\cite{molzahn2017survey}, wireless sensor networks~\cite{10.1145/984622.984626}, robotics~\cite{halsted2021survey}, machine learning~\cite{9446488}, etc. Realizing all potential benefits of distributed optimization in many of these applications requires a fully distributed termination method that eliminates the need for a central coordinator to check the termination statuses of all agents. However, to the best of our knowledge, the literature lacks a fully distributed method for terminating general distributed optimization algorithms.

A fully distributed optimization algorithm implementation requires the use of a distributed termination method. By \emph{distributed termination}, we mean a method that allows agents to agree that a predefined global termination criterion is satisfied and the algorithm can thus terminate despite only having their local information and data communicated with immediate neighbors. Finding an appropriate termination method is a challenging problem in distributed computing in general because no agent has direct access to the global system's convergence status~\cite{fokkink2018distributed}. In addition, it is desirable that termination methods for distributed optimization algorithms are symmetrical with common rules across all agents, require minimal prior knowledge of the system, and have no restrictions on the communication network topology.

\subsection{Related Work}
Terminating computations has been extensively studied in the context of distributed computing. The first methods for terminating distributed computations run in a combination of series and parallel sequences were independently proposed in~\cite{francez1980distributed} and~\cite{dijkstra1980termination}. In the distributed termination problem of distributed computing, there are multiple processors, each with a predecessor and successors. A processor is active when it receives a message from a predecessor. Once the processor completes its computations, the processor becomes an idle processor until a new message is received. A termination method stops the computation when all the processors are idle and no messages are in transit~\cite{fokkink2018distributed}.

Several distributed termination methods have been proposed in the literature to terminate distributed computing processors~\cite{MATOCHA1998207}. Reference~\cite{dijkstra1980termination} proposes a method that uses a rooted tree to track the global termination status. Another widely used approach proposed in~\cite{mattern1989global} uses credit distribution, where each processor receives a credit from its predecessors when activated, and returns the credit when the computations are completed. Another family of distributed termination methods uses a wave-like message passing of the termination status~\cite{francez1980distributed,rana1983distributed}. Reference~\cite{rana1983distributed} proposes using a wave-based method that passes clock-like information and requires defining a Hamiltonian cycle that traverses all processors. A similar method proposed in~\cite{feijen1999shmuel} uses a predefined ring network to pass the termination status across the processors.

A method proposed in~\cite{shavit1986new} combines ideas from~\cite{dijkstra1980termination} and~\cite{rana1983distributed} by using a forest, i.e., multiple rooted trees, in addition to a wave circulating between the trees' roots. A method proposed in~\cite{4339197} uses a combination of credit distribution and a rooted tree, where the root processor maintains a ledger and counts the credits of the successors and predecessors in each level of the tree. Reference~\cite{mattern1987experience} proposes a ``vector method'' where a vector containing information about the termination status of each processor is communicated through a ring network until the termination criteria are satisfied.

An important property for distributed termination methods is \emph{fault tolerance}, i.e., the ability of an algorithm to operate reliably even with the existence of a faulty processor~\cite{MATOCHA1998207}. A fault-tolerant method proposed in~\cite{misra1983detecting} uses multiple message passing at the same time to prevent faulty termination. The authors of~\cite{lai1995n} propose using a variation of the rooted tree method by having multiple roots of the rooted tree and a mechanism for reorganizing the tree's nodes. Most recently, \cite{karlos2021fault} proposes a modification of the ring method in~\cite{feijen1999shmuel} that regularly checks the processors' statuses and reorganizes the ring's network upon detecting a faulty processor.

Although these distributed termination methods are widely used in distributed computing, they are not suitable for distributed optimization algorithms. The main difference between distributed optimization and distributed computing is that the agents in distributed optimization algorithms run simultaneously, i.e., there are no successors and predecessors, and no agent will be idle until the algorithm terminates. Further, fully distributed optimization algorithms may not have a central agent that initiates and terminates the algorithm similar to~\cite{dijkstra1980termination,mattern1989global, 4339197}, nor a communication network topology restriction similar to~\cite{khokhar2003termination, rana1983distributed, shavit1986new, baker2006efficient, karlos2021fault, feijen1999shmuel, mattern1987experience}.

The only approach we are aware of that roughly discusses a similar problem is proposed in~\cite{9186330} to solve distributed model predictive control problems. The method in~\cite{9186330} uses a gossip-based method~\cite{5513273} to calculate the global termination criteria. Although the method is more suitable for distributed control than optimization, the same concept could be extended to distributed optimization algorithms. However, extending the method requires a problem-specific formulation to design the local termination criteria. Moreover, the method in~\cite{9186330} solves the gossip-based algorithm after each iteration, thus adding a significant computational and communications burden, and susceptible to faults that may cause the algorithm to terminate before satisfying the termination criterion. 

\subsection{Contributions}
This paper proposes a fully distributed termination method for a general distributed optimization algorithm. The proposed method is fully distributed and guaranteed to terminate the algorithm after the global termination criterion is satisfied. The method uses simple rules that are symmetrical across all agents, requires minimal prior knowledge of the system, and does not impose any restrictions on the communication network topology. Furthermore, we propose an extension to the method that is fault-tolerant to early termination. The paper presents a proof of the method's correctness as well as numerical verification.

To summarize, this paper: 
\begin{itemize}
\item Motivates the importance of a fully distributed termination method for the practical implementation of distributed optimization algorithms,
\item Proposes a termination method for fully distributed optimization algorithms,
\item Provides a fault-tolerant extension that prevents early termination caused by a fault,
\item Proves the correctness of the proposed method and the fault-tolerant extension, and
\item Demonstrates the effectiveness of the proposed method using practical examples of optimal power flow problems for distributed optimization of electric power grids.
\end{itemize}

\subsection{Organization}
The rest of the paper is organized as follows: Section~\ref{sec2:problem_formulation} presents the mathematical notations and defines the distributed optimization termination problem. Section~\ref{sec3:methodology} presents the proposed method and the correctness proof. Section~\ref{sec4:fault_termination} describes a variant of the proposed method that prevents faulty termination. Section~\ref{sec5:result} shows test cases and numerical verification. Section~\ref{sec6:discussion} provides remarks and extensions, and Section~\ref{sec7:conclusion} presents conclusions.

\section{Distributed Termination Problem} \label{sec2:problem_formulation}

This section introduces notation and states the distributed termination problem with the help of several definitions.

\subsection{Notation}
We represent the communication network with a graph $G(\mathcal{A},\mathcal{E})$, where $\mathcal{A}$ is the set of agents and $\mathcal{E}$ is the set of communication links. We use $\mathcal{N}(i)\subset \mathcal{A}$ to denote $\left\lbrace j \mid (i,j)\in \mathcal{E}\right\rbrace$, the set of neighboring agents that are connected to agent~$i$ by a communication link (excluding agent~$i$ itself). We define a path between agent~$i$ and $j$ as $ P(i,j) := \left\lbrace k_0, k_1, k_2, \ldots, k_{N} \right\rbrace$, a sequence of distinct agents such that $k_0 = i, k_{N} =j$, $(k_n,k_{n+1}) \in \mathcal{E}$, and the path length $N$, where the path length is the number of communication links in the path. We define the distance $d(i,j)$ as the length of the shortest path between agents~$i$ and $j$ and the diameter of a network as $D := \displaystyle \max_{i,j \in \mathcal{A}} \left\lbrace d(i,j) \right\rbrace$, the maximum distance among all the pairs of agents. We denote the iteration index with $t\in \mathcal{T}$, where $\mathcal{T}=\left\lbrace 0, 1, 2, 3, \ldots, \overline{T}_G \right\rbrace$ is the set of iterations before the algorithm terminates, and $\overline{T}_G$ is the maximum number of iterations. We use $|\,\cdot\,|$ to denote the cardinality of a set, and $\bigcup$ to denote the union of two sets. We use $\mathcal{A}\setminus \mathcal{A'}$ to indicate the set of agents in~$\mathcal{A}$ and not in~$\mathcal{A'}$. For any general variable $X$, the superscript $t$ in $X^t$ denotes variables that are known at iteration $t$, the subscript $i$ in $X_i$ denotes variables belonging to agent~$i$, and subscript $G$ in $X_G$ denotes global variables that no agent can directly access. If $X$ is a vector, then $X[i]$ is a scalar that denotes the $i$-th entry of the vector~$X$.

\subsection{Problem Statement and Definitions}
Consider multiple agents solving an optimization problem using an iterative distributed optimization algorithm. The distributed agents are connected via a communication network represented by an undirected graph $G(\mathcal{A}, \mathcal{E})$, where $\mathcal{A}$ and $\mathcal{E}$ are the sets of agents and communication links. At each iteration, the agents perform local computations and share information about their local status via the communication network. Agent~$i$ communicates with agent~$j$ if and only if $(i,j)\in \mathcal{E}$. At the end of each iteration, the agents decide whether to terminate the algorithm or proceed to the next iteration based on local rules. The distributed optimization algorithm should only terminate after the agents satisfy a global termination criterion. We have the following definitions: 

\begin{definition}[D1]
\emph{Local termination criterion}: A condition defining when an agent reaches consensus on the computation results with the neighboring agents. We denote the status of the local termination criterion of agent $i$ at iteration $t$ by $B^t_i \in \left\lbrace 0, 1 \right\rbrace$, where $B^t_i=1$ when agent $i$ satisfies the local termination criterion at iteration $t$, and $B^t_i=0$ otherwise. 
\end{definition}

The predefined condition of the local termination criterion involves reaching a consensus with neighboring agents on some shared quantities within a predefined tolerance. These quantities are typically shared variables' values or the amount by which coupling constraints between neighboring agents are violated. Shared quantities may also include dual residuals that measure the optimality of the solutions. Agents measure consensus using a norm of the mismatch or violations with the neighboring agents. See, e.g.,~\cite[Section~3.3]{boyd2011distributed} for an example of a local termination criterion. Each agent evaluates their local termination criterion using available information from local computations and data communicated with immediate neighbors. 

\begin{definition}[D2]
\emph{Global termination criterion}: A condition defining when all of the agents reach consensus on the distributed optimization results. The global termination criterion is satisfied when all agents satisfy their local termination criteria. We denote the status of the global termination criterion at iteration $t$ by $B^t_G \in \left\lbrace 0, 1 \right\rbrace$, where $B^t_G =1$ when all agents satisfy their local termination criteria at iteration $t$ (i.e., $B^t_i =1$,  for all $i \in \mathcal{A}$), and $B^t_G=0$ otherwise.
\end{definition}

Satisfying the global termination criterion indicates that the distributed optimization algorithm has reached the desired results, and agents should terminate the computations. Since the global termination criterion depends on information from every agent, its status is not directly available to any of the agents. Thus, agents should communicate the global termination criterion and agree on terminating the algorithms.

\begin{definition}[D3]
\emph{Appropriate algorithm termination}: The agents simultaneously terminate their computations after the global termination criterion is satisfied.
\end{definition}

In other words, we say that an algorithm terminates \emph{appropriately} if the following two conditions are satisfied upon termination: 1)~the global termination criterion is satisfied (i.e., all agents' local termination criteria are satisfied) and 2)~all agents terminate their computations simultaneously. More formally, if the global termination criterion is satisfied at iteration $T_G$, then an appropriate algorithm termination implies that $T_G \leq \overline{T}_i = \overline{T}_G$, for all $i \in \mathcal{A}$, where $\overline{T}_i$ is the final iteration of agent~$i$ and $\overline{T}_G$ is the last iteration of any agent. Thus, the algorithm may still terminate appropriately if the agents continue their computations for additional iterations after the global termination criterion is satisfied. While the additional $\overline{T}_G - T_G$ iterations are not necessary for the convergence of the distributed algorithm, they may be needed to ensure an appropriate algorithm termination.

\section{Distributed Termination Method} \label{sec3:methodology}
This section presents our proposed distributed termination method. This method is based on a set of rules that agents apply at each iteration to either certify that the global termination criterion has been met and they can thus terminate or that they should proceed to the next iteration. The agents apply these rules after the communication step of the distributed algorithm, prior to starting a new iteration. These rules depend strictly on information available to each agent via local computations and communication with neighboring agents. The proposed method combines ideas from~\cite{rana1983distributed}, in the sense of passing the termination status using a wave-like message with a clock-like stamp, and the vector method from~\cite{mattern1987experience} since the agents' termination statuses are stored in a vector.

Our method relies on the following three assumptions:
\begin{assumption}[A1]
The communication network graph is connected, i.e., for each pair of agents~$i$ and~$j\in\mathcal{A}$, there is a path $P(i,j)$.
\end{assumption}

\noindent This assumption implies that information communicated by an agent can traverse all other agents.

\begin{assumption}[A2]
All agents know the total number of agents $|\mathcal{A}|$ and the diameter of the communication network~$D$. No other prior knowledge is required.
\end{assumption}

\noindent This assumption implies that agents have a minimal prior knowledge of the communication network topology and the total number of other agents. The only additional knowledge each agent has comes from their local computations and the information communicated by neighboring agents. 

\begin{assumption}[A3]
The local termination criterion is defined by a monotonic process, i.e., $B_i^{t} \leq B_i^{t^{\prime}}$ if $t \leq t^{\prime}$.
\end{assumption}

\noindent In other words, after being satisfied, an agent's local termination criterion remains satisfied for all subsequent iterations. 

\subsection{Method Procedure}
We next present our proposed distributed termination method. Each agent~$i$ stores a local vector $V_i\in \left\lbrace 0,1 \right\rbrace^{|\mathcal{A}|}$, called a \emph{termination vector}, and a local scalar $T_i\in \mathcal{T}$, called a \emph{termination iteration scalar}. We call the combination of both $V_i$ and $T_i$ the \emph{termination status} for agent~$i$. The termination vector $V_i$ stores agent~$i$'s information about the satisfaction of every agents' local termination criterion. The termination iteration scalar $T_i$ stores agent~$i$'s information regarding the iteration number when the last agent satisfies its local termination criterion. Let $t$ be the current iteration and $B^t_i$ be the status of the local termination criterion of agent~$i$ at iteration $t$ as defined in Definition~D1. At $t = 0$, agent~$i$ uses the following rule to initialize its termination status:
\begin{rules}[R0]
Set $T^0_i = 0$ and $V^0_i[k]= 0$ for all $k \in \mathcal{A}$.
\end{rules}
At iteration~$t \geq 1$, agent~$i$ updates its local termination criterion's status $B_i^t$ and receives the termination statuses $V^{t-1}_j$ and $T^{t-1}_j$ from neighboring agents~$j \in \mathcal{N}(i)$. Then, agent~$i$ applies the following rules:
\begin{rules}[R1]
Set $T^{t}_i = \displaystyle \max_{j\in \mathcal{N}(i)} \left\lbrace T^{t-1}_j \right\rbrace$ and $V^{t}_i[k] = \displaystyle \max_{j\in \mathcal{N}(i)}\left\lbrace V^{t-1}_j[k] \right\rbrace$, for all $k \in \mathcal{A} \setminus \left\lbrace i \right\rbrace$.
\end{rules}
\begin{rules}[R2]
If $V^{t-1}_i[i] = 0$ and $B^{t}_i = 1$, then set $T^{t}_i = t$ and $V^{t}_i[i] = 1$.
\end{rules}
\begin{rules}[R3]
If $\sum_{k\in \mathcal{A}}{V^{t}_i[k]} = |\mathcal{A}|$ and $t \geq T^{t}_i + D$, terminate. 
\end{rules}%
\noindent Recall that $D$ in Rule~R3 is the diameter of the communication network, which we assume each agent knows due to Assumption~A2. Rule~R1 uses communications from neighboring agents to update an agent's local termination status. Rule~R2 uses an agent's local information to update its local termination status when the local termination criterion is first satisfied. Rule~R3 dictates when the agents terminate their computations. The four rules R0--R3 ensure that the distributed optimization algorithm will terminate appropriately after $T_G + D$ iterations. In other words, the agents will simultaneously terminate computations at $D$ iterations (diameter of the communication network) after satisfying the global termination criterion (which occurs at iteration $T_G$). Using this method, the agents only need to store a binary vector of size $|\mathcal{A}|$ ($V_i\in \{0,1\}^{|\mathcal{A}|}$) and a single non-negative integer ($T_i\in \mathbb{Z}_{\ge0}$).

\subsection{Method Correctness}
The algorithm is \emph{correct} if the four rules R0--R3 ensure that the algorithm will appropriately terminate; that is, agents following R0--R3 terminate simultaneously after satisfying the global termination criterion. To prove the correctness of the algorithm, we use the following three propositions.

\begin{proposition}[P1]
For any agents~$i$ and $j \in \mathcal{A}$, the $j$-th entry of the termination vector of agent~$i$ is one (i.e., $V^{t}_i[j] = 1$) if and only if agent~$j$ satisfies its local termination criterion (i.e., $B^{t}_j= 1$). 
\end{proposition}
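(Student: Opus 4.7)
The plan is to prove both directions of the biconditional by analyzing the only two mechanisms that can change a termination vector entry from $0$ to $1$: Rule~R2, which can flip only agent~$i$'s own entry $V^t_i[i]$, and Rule~R1, which propagates each entry $V^t_i[k]$ for $k \neq i$ by taking the maximum over neighbors' previous values. Assumption~A3 supplies monotonicity of each $B_j$, and I would first establish a matching monotonicity for $V$: once $V^{t_0}_j[j] = 1$ (the only way this entry can ever become $1$), it stays at $1$ for all subsequent iterations, and a short induction on $d(i,j)$ along a shortest path from $j$ to $i$ then shows that $V^t_i[j] = 1$ for every $t \geq t_0 + d(i,j)$.

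For the ``only if'' direction ($V^t_i[j] = 1 \Rightarrow B^t_j = 1$), I would split on whether $j = i$. If $j = i$, R1 never touches $V^t_i[i]$, so a $1$ there must have been deposited by R2 at some iteration $t' \leq t$, whose guard required $B^{t'}_i = 1$; Assumption~A3 then upgrades this to $B^t_i = 1$. If $j \neq i$, I would unwind R1 backward in time, producing a chain of agents $i = k_0, k_1, k_2, \ldots$ each holding a $1$ in their $j$-entry at strictly earlier iterations. Because R1 never writes the $j$-entry at agent~$j$ itself, the chain can only terminate at agent~$j$, where R2 must have fired, giving $B^{t'}_j = 1$ for some $t' \leq t$; monotonicity again finishes the argument.

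For the ``if'' direction, let $t_0 \leq t$ be the first iteration at which agent~$j$'s local criterion is satisfied (such $t_0$ exists by A3 since $B^t_j = 1$). Then R2 sets $V^{t_0}_j[j] = 1$, and the auxiliary monotonicity property propagates this through the graph. The main obstacle I anticipate is that the converse as literally stated is tight only after the information has had time to diffuse: $V^t_i[j]$ may still be $0$ at iterations $t_0 \leq t < t_0 + d(i,j)$. I expect the statement to be read in the propagated/steady-state sense -- precisely the reason Rule~R3 mandates a grace period of $D$ additional iterations after $T_i$ stops updating -- so I would accompany the reverse direction with the explicit propagation bound ``$V^t_i[j] = 1$ for all $t \geq t_0 + d(i,j)$,'' which is the form actually needed by the subsequent correctness argument.
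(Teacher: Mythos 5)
Your forward direction is essentially the paper's own argument: unwind Rule~R1 backward in time to obtain a chain of agents each holding a $1$ in the $j$-th entry at strictly earlier iterations, observe that the chain must terminate at an agent that used Rule~R2 (hence at agent~$j$ itself, since R2 only writes an agent's own entry), and then invoke Assumption~A3 to carry $B^{t'}_j=1$ forward to iteration~$t$. The one place you diverge is the backward direction, and there you are actually more careful than the paper. The paper's proof of the converse consists of a single sentence establishing only $V^t_j[j]=1$ via Rule~R2, i.e., the case $i=j$; for $i\neq j$ the literal biconditional is false at iterations $t_0 \le t < t_0 + d(i,j)$ before the information has diffused, which the paper itself tacitly concedes in the remark following Proposition~P2. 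Your proposal to replace the converse with the propagation bound ``$V^t_i[j]=1$ for all $t \ge t_0 + d(i,j)$'' is the statement that is actually needed downstream (in Proposition~P3), and your auxiliary monotonicity lemma for $V$ is a genuine gap-filler: since Rule~R1 is an assignment to a maximum over neighbors' \emph{previous} values (not including agent~$i$'s own), the fact that a $1$ entry never reverts to $0$ requires the joint induction on $t$ across agents that you sketch, and the paper assumes it silently. Both additions are correct and strengthen the argument without changing its overall structure.
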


\begin{proof}
For the forward implication, since the value of the termination vector is initialized as $V^0_i[j] = 0$, for all $i,j\in \mathcal{A}$ due to Rule~R0 and $V^{t}_i[j] = 1$, then the value of $V^{t}_i[j]$ changes at or before iteration $t$. Without loss of generality, assume $V^{t_i-1}_{i_0}[j] = 0$ and $V^{t_i}_{i_0}[j] = 1$ at iteration $t_i \leq t$ for some agent~$i_0 \in \mathcal{A}$. This change occurs due to either Rule~R1 or Rule~R2. If this change occurs due to Rule~R2, then $i_0 = j$ since Rule~R2 only changes the same agent's status, and we can thus directly conclude from Rule~R2 that agent~$j$ satisfies its local termination criterion at iteration $t_i$ (i.e., $B^{t_i}_j = 1$). Since $t_i \leq t$, then $B^{t}_j = 1$ due to Assumption~A3, which completes the forward implication for this case. Alternatively, if the change occurs due to Rule~R1, then $i_0 \neq j$ since Rule~R1 only changes the status associated with another agent. This implies $V^{t_i-2}_{i_1}[j] = 0$ and $V^{t_i-1}_{i_1}[j] = 1$, for some agent~$i_1\in \mathcal{N}(i_0)$. Using a similar argument to the one we used for agent~$i_0$, we conclude either agent~$i_1 = j$, which implies $B^{t}_j = 1$, or there is $V^{t_i-3}_{i_2}[j] = 0, V^{t_i-2}_{i_2}[j] = 1$, for some agent~$i_2\in \mathcal{N}(i_1)$ and~$i_2\notin\mathcal{N}(i_0) \bigcup \left\lbrace i_0 \right\rbrace$. We recursively use the same argument to obtain a sequence $\mathcal{I} = \left\lbrace i_0, i_1, i_2, \ldots \right\rbrace \subseteq \mathcal{A}$. The sequence $\mathcal{I}$ is finite since $\mathcal{A}$ is a finite set and each agent can appear no more than once. Consider the last agent in $\mathcal{I}$, denoted as agent~$i'$, with $V^{t_{i'}}_{i'}[j] = 1$ and $t_{i'} = t_i-(|\mathcal{I}|-1) \leq t$. Agent~$i'$ must have used Rule~R2 to update $V^{t_{i'}}_{i'}[j]$, since it is the last agent in $\mathcal{I}$; otherwise, if $i'$ used Rule~R1, then $i'$ would not be the last agent in $\mathcal{I}$. Since $i'$ used Rule~R2, then $i' = j$ and $B^{t_{i'}}_j = 1$. Thus, $B^{t}_j = 1$ due to Assumption~A3 and $t_{i'} \leq t$, which completes the proof of the forward implication. The backward implication is a direct consequence of Rule~R2. If agent~$j$ satisfies the local termination criterion $B^t_j = 1$, then $V^t_j[j] = 1$ due to Rule~R2.
\end{proof}

\begin{proposition}[P2]
If, for any agent~$i\in\mathcal{A}$, all terms in the termination vector are equal to one (i.e., $\sum_{k\in\mathcal{A}} V_i^t[k] = |\mathcal{A}|$), then the distributed optimization algorithm satisfies the global termination criterion (i.e., $B_G^t = 1$).
\end{proposition}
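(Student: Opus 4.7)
The plan is to reduce Proposition~P2 almost entirely to Proposition~P1, since P1 has already established the tight correspondence between entries of the termination vector and the satisfaction of local termination criteria. The statement to be proven says that a single agent seeing an all-ones termination vector is enough to certify the global termination criterion, so the proof is really a matter of propagating the coordinate-wise equivalence from P1 up to the global level via Definition~D2.

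First I would unpack the hypothesis. Since $V_i^t[k]\in\{0,1\}$ for every $k\in\mathcal{A}$, the assumption $\sum_{k\in\mathcal{A}} V_i^t[k] = |\mathcal{A}|$ forces $V_i^t[k] = 1$ for every $k\in\mathcal{A}$. Then I would apply Proposition~P1 coordinate-by-coordinate: for each $k\in\mathcal{A}$, the forward implication in P1 applied to agent $i$ and index $k$ yields $B_k^t = 1$.

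Finally, I would invoke Definition~D2 directly: the global termination criterion is defined to be satisfied ($B_G^t = 1$) precisely when $B_k^t = 1$ for every $k\in\mathcal{A}$, which is exactly what the previous step establishes. This gives $B_G^t = 1$, completing the proof.

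The main obstacle is essentially nonexistent at this stage, because all of the substantive work — in particular the recursive backward-tracing argument through neighbors that shows an entry $V_i^t[j]=1$ must have originated from a true Rule~R2 update at agent $j$ — was already handled in the proof of P1. The only thing to be careful about is that P1 is stated as a biconditional for a \emph{fixed} pair $(i,j)$ and at the \emph{same} iteration $t$ on both sides, so one must simply check that the quantification over $k$ is legitimate (it is, because P1 holds for every pair of agents and every iteration). No additional use of Assumption~A3 or of the diameter $D$ is needed here; those will become relevant for Rule~R3 and the final correctness theorem rather than for P2 itself.
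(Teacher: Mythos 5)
Your proof is correct and follows essentially the same route as the paper's: the all-ones hypothesis forces $V_i^t[k]=1$ for every $k$, Proposition~P1 then gives $B_k^t=1$ for every $k$, and Definition~D2 yields $B_G^t=1$. Your version merely spells out the coordinate-wise application of P1 a bit more explicitly than the paper does.
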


\begin{proof}
Using Proposition~P1, if $\sum_{k\in \mathcal{A}}{V^t_i[k]} = |\mathcal{A}|$, then all agents satisfy their local termination criterion. Thus, the agents satisfy the global termination criterion by Definition~D2.
\end{proof}

\noindent Note that the reverse is not necessarily true since we might have a case where all the agents satisfy their local termination criteria, but some agents have not received the updated local termination status yet.

\begin{proposition}[P3]
If the agents satisfy the global termination criterion at iteration~$T_G$, then the algorithm terminates appropriately in $T_G + D$ iterations.
\end{proposition}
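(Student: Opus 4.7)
The plan is to establish two separate claims: (i) every agent triggers Rule~R3 at iteration $T_G + D$, and (ii) no agent triggers R3 at any earlier iteration. Together these force simultaneous termination at exactly $t = T_G + D$, and by P2 the global criterion is already met at that iteration, so Definition~D3 is satisfied.

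The workhorse will be a propagation characterization. For each agent $k \in \mathcal{A}$, let $t_k^{\star}$ denote the first iteration at which $B_k^{t_k^{\star}} = 1$. Assumption~A3 together with the hypothesis $B_G^{T_G} = 1$ guarantees $t_k^{\star} \leq T_G$ for every $k$, with $T_G = \max_k t_k^{\star}$. I would then prove that $V_j^t[k] = 1$ if and only if $t \geq t_k^{\star} + d(k, j)$, and analogously that $T_j^t \geq t_k^{\star}$ if and only if $t \geq t_k^{\star} + d(k, j)$. The ``if'' direction is a short induction on $d(k, j)$: the base case $j = k$ is Rule~R2, and the inductive step uses Rule~R1's maximum over neighbors on a shortest path. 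The ``only if'' direction for $V$ reruns the chain-tracing argument inside the proof of P1, observing that the sequence $i_0 = j, i_1, \ldots, i_m = k$ produced there is a path of length $m \geq d(k, j)$ and that the $m$ flips along that path consume $m$ iterations, so $t \geq t_k^{\star} + m \geq t_k^{\star} + d(k, j)$. The same argument works verbatim for $T$, because Rule~R1 propagates $T$ at exactly the same rate as $V[k]$, and Rule~R2 sets $T_k^{t_k^{\star}} = t_k^{\star}$ at the very iteration at which it flips $V_k[k]$ to one.

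With the lemma in hand, claim (i) is immediate: at $t = T_G + D$ every pair $(i, k)$ satisfies $t_k^{\star} + d(k, i) \leq T_G + D$, so $\sum_k V_i^{T_G+D}[k] = |\mathcal{A}|$; and choosing $k^{\star}$ with $t_{k^{\star}}^{\star} = T_G$ gives $T_i^{T_G+D} \geq T_G$, which, combined with the easy upper bound $T_j^t \leq T_G$ for all $t$ and $j$ (Rule~R2 only ever stamps iterations $\leq T_G$, and Rule~R1 cannot exceed the historic maximum), pins $T_i^{T_G+D} = T_G$. Both clauses of R3 thus hold at $t = T_G + D$ for every agent.

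The main obstacle is claim (ii), and this is precisely where the coupled propagation of $V[k^{\star}]$ and $T$ becomes essential. Suppose for contradiction that some agent $i$ triggered R3 at some $t < T_G + D$. The first clause $\sum_k V_i^t[k] = |\mathcal{A}|$ forces $V_i^t[k^{\star}] = 1$, so by the lemma $t \geq T_G + d(k^{\star}, i)$; applying the same lemma to $T$ yields $T_i^t \geq T_G$; and then the clock clause $t \geq T_i^t + D \geq T_G + D$ contradicts $t < T_G + D$. The subtle point is that the two clauses of R3 cannot be decoupled: as soon as an agent's $V$-flags are all one, its $T$-stamp is already pinned to $T_G$, so the clock comparison cannot fire prematurely.
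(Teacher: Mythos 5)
Your proof is correct under the same reading of the rules that the paper itself uses, and it is in one respect \emph{more} complete than the paper's own argument. For the half of the claim that the paper does prove --- that every agent fires Rule~R3 at iteration $T_G+D$ --- you and the paper use essentially the same idea: the stamp $T_G$ and the flag $V[\cdot]$ propagate one hop per iteration along shortest paths, and $d(i,j)\le D$ bounds the delay. The genuine difference is your claim~(ii). Definition~D3 requires \emph{simultaneous} termination, so one must also rule out some agent satisfying both clauses of R3 at an iteration $t<T_G+D$; the paper's proof never addresses this, whereas you correctly identify it as the main obstacle and close it with the coupling argument ($\sum_k V_i^t[k]=|\mathcal{A}|$ forces $V_i^t[k^\star]=1$, which by the chain-tracing bound forces $t\ge T_G+d(k^\star,i)$, which pins $T_i^t=T_G$, which makes the clock clause fail for $t<T_G+D$). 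That addition is necessary for the statement as defined and is a real improvement.

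Two caveats. First, your lemma ``$T_j^t\ge t_k^\star$ iff $t\ge t_k^\star+d(k,j)$'' is false in the ``only if'' direction for a general agent $k$ (a nearer agent with a later stamp also makes $T_j^t\ge t_k^\star$); you only ever invoke the ``if'' direction, and only for the last agent $k^\star$, so nothing breaks, but you should state the lemma only in the form you use. Second, and more substantively: the ``if'' direction for $T$ requires that $T_i$ never decreases, but Rule~R1 as literally written sets $T_i^t=\max_{j\in\mathcal{N}(i)}T_j^{t-1}$ with $i\notin\mathcal{N}(i)$, so agent $k^\star$'s own stamp can be overwritten by smaller neighboring values at iteration $T_G+1$ and the $T_G$-wave need not persist (on a path graph this can even let an agent's clock clause fire early). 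The paper's proof makes the identical tacit assumption when it asserts $T_i^t=T_G$ for all $t\ge T_G$, so this is a shared ambiguity in the rule statement rather than a defect unique to your argument; if you make explicit that the maximum in R1 is taken over $\mathcal{N}(i)\cup\{i\}$, your proof goes through as written.
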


\begin{proof}
Without loss of generality, consider the case when all the agents satisfy their local termination criteria at iteration~$t^{\prime}$ except for agent~$i$, i.e., $B^{t^{\prime}}_j=1$, for all $j\in \mathcal{A} \setminus \left\lbrace i \right\rbrace$. If agent~$i$ satisfies the local termination criterion at iteration $T_G$, then $T^{t}_i = T_G$, for all $t \geq T_G$ due to Rule~R2 and Assumption~A3. By communicating $T_G$ and using Rule~R1 to update the iteration termination scalar in the next iterations by all agents, we get $T^t_j = T_G$, for all $j\in \mathcal{A}, t_j \geq T_G + D$, since the number of iterations needed to communicate a status update between agents~$i$ and~$j$ is equal to the distance between the agents $d(i,j)$ and the distance between any two agents is less than or equal to the diameter of the communication network, i.e., $d(i,j) \leq D$. The analogous argument applies to updates for $V^t_j[k],$ for all $ j,k\in \mathcal{A}$. This implies $T^t_j = T_G$ and $V^t_j[k] = 1$, for all $j,k\in \mathcal{A}, t\geq T_G + D$. Thus, all agents receive the updated global termination criterion status using Proposition~P2 and terminate appropriately at iteration $T_G + D$ using Rule~R3.
\end{proof}

\noindent The last proposition shows that the proposed method terminates the distributed optimization appropriately and thus the method is correct.

\section{Fault-Tolerant Distributed\\ Termination Method} \label{sec4:fault_termination}
The termination method described in Section~\ref{sec3:methodology} assumes all the agents are reliable and share accurate termination statuses. However, agents may fail to properly update termination statuses due to communication errors or other faults. Not following the update rules may either cause early termination or failure to terminate. This section extends the method in Section~\ref{sec3:methodology} to prevents early termination. Formally, we define:

\begin{definition}[D4]
\emph{Faulty termination} (also called \emph{early termination)}: One or more agents terminate their computations before the global termination criterion is satisfied.
\end{definition}

Faulty termination could occur if there were faulty agents not following Rule~R1 correctly by assigning a false positive or false negative termination status to one or more entries in their termination vector. Thus, Proposition~P1 would not be true. We use the following definition: 

\begin{definition}[D5]
\emph{Fault injection}: Any update to the termination status that does not follow the termination method's rules.
\end{definition}

For notational convenience, we call an agent that injects a faulty status a \emph{faulty} agent and an agent whose termination status is incorrectly reported a \emph{faulted} agent. That is, if agent $i$ falsely reports that agent $j$ has reached its termination criterion in violation of Rule~R1, we say that agent $i$ is faulty and agent $j$ is faulted.

Before stating additional rules, we identify two sufficient conditions that indicate the existence of a faulty termination status. We then use these two conditions to correct any faulty termination status in the fault-tolerant method.  

\begin{proposition}[P4]
If, for some agents~$i$ and $k \in \mathcal{A}$ and iteration $t$, the $k$-th entry of the termination vector of agent~$i$ is $V^{t-1}_i[k] = 1$ and the termination vector for some neighboring agent~$j\in \mathcal{N}(i)$ is $V^{t}_j[k] = 0$, then there is a faulty termination status.
\end{proposition}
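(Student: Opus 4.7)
The plan is to prove the contrapositive: assume every agent has correctly followed Rules R0--R3 up through iteration $t$, then show that the observation $V^{t-1}_i[k] = 1$ and $V^t_j[k] = 0$ for some $j\in\mathcal{N}(i)$ is impossible. I would split the argument into two cases depending on whether $k$ coincides with~$j$, since Rule~R1 updates $V^t_j[k]$ only for $k\neq j$ while Rule~R2 governs the diagonal entry $V^t_j[j]$.

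For the case $k\neq j$, the argument is direct. Because the communication graph is undirected, $j\in\mathcal{N}(i)$ implies $i\in\mathcal{N}(j)$. Applying Rule~R1 at agent~$j$ at iteration~$t$ gives
\[
V^t_j[k] \;=\; \max_{\ell\in\mathcal{N}(j)} V^{t-1}_\ell[k] \;\geq\; V^{t-1}_i[k] \;=\; 1,
\]
so $V^t_j[k]=1$, contradicting $V^t_j[k]=0$. Any agent reporting $V^t_j[k]=0$ in this case must therefore have injected a fault in the sense of Definition~D5.

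The case $k=j$ is the one requiring more care, since Rule~R1 explicitly excludes the diagonal entry and we cannot directly propagate a~$1$ from agent~$i$. Here I would route through Proposition~P1: the forward implication applied to $V^{t-1}_i[j]=1$ yields $B^{t-1}_j=1$, and the backward implication then yields $V^{t-1}_j[j]=1$. Now I need monotonicity of the diagonal entry to conclude $V^t_j[j]=1$. This follows from the structure of the rules themselves: only Rule~R2 can alter $V^t_j[j]$, and Rule~R2 only ever changes a~$0$ into a~$1$, never the reverse; moreover, Assumption~A3 guarantees $B^t_j=1$ as well, so even if one prefers to re-trigger Rule~R2, the resulting value is still~$1$. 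Either way $V^t_j[j]=1$, contradicting $V^t_j[j]=0$.

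The main obstacle is the diagonal case $k=j$, because Rule~R1 does not enforce propagation to agent~$j$'s self-entry and so the conclusion cannot be drawn without appealing to both Proposition~P1 and the monotonicity implicit in Rule~R2 together with Assumption~A3. Once this subtlety is handled, combining the two cases shows that the observed configuration $V^{t-1}_i[k]=1$ with $V^t_j[k]=0$ is incompatible with faithful execution of R0--R3 and therefore certifies the presence of at least one faulty termination status update.
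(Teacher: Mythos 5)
Your proof is correct and follows essentially the same route as the paper's: argue by contraposition and invoke Rule~R1 at agent~$j$ (using that $i\in\mathcal{N}(j)$ since the graph is undirected) to force $V^{t}_j[k]\geq V^{t-1}_i[k]=1$, contradicting $V^{t}_j[k]=0$. You are in fact more careful than the paper's one-line argument, which cites Rule~R1 without noting that R1 only updates entries $k\in\mathcal{A}\setminus\{j\}$; your separate handling of the diagonal case $k=j$ via Proposition~P1 and the fact that Rule~R2 never resets $V_j[j]$ closes a small gap the paper leaves implicit.
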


\begin{proof}
Assume Proposition~P4 is not true, that is, there is no faulty termination status, $V^{t-1}_i[k] = 1$, and $V^{t}_j[k] = 0$, $j\in \mathcal{N}(i)$ for some $i$ and $k \in \mathcal{A}$. If $V^{t-1}_i[k] = 1$, then $V^{t}_j[k] = 1$, for all $j\in \mathcal{N}(i)$ due to Rule~R1, which contradicts our assumption that there is $j\in \mathcal{N}(i)$ with $V^{t}_j[k] = 0$. Thus, there is a faulty termination status. 
\end{proof}

\begin{proposition}[P5]
If, for some agents~$i$ and $k \in \mathcal{A}$ and iteration $t$, the termination vectors are $V^{t-1}_i[k] = 0$ and $V^{t}_i[k] = 1$, and the termination iteration scalar is either $T^t > t$ or $T^t < t-D$, then there is a faulty termination status. 
\end{proposition}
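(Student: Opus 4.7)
The plan is to prove P5 by contradiction: I will assume no faulty termination status has been injected and that the premises $V^{t-1}_i[k] = 0$ and $V^{t}_i[k] = 1$ hold, then derive $t - D \leq T^t_i \leq t$, which is incompatible with the hypothesis that $T^t_i > t$ or $T^t_i < t - D$.

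The upper bound $T^t_i \leq t$ is immediate by induction on $t$: R0 gives $T^0_i = 0$, and in the inductive step R1 produces a value at most $t - 1$ from the inductive hypothesis, while R2 may overwrite this to exactly $t$.

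The lower bound $T^t_i \geq t - D$ is the heart of the argument. I would first establish a wavefront lemma: under correct operation each entry $V^{\tau}_x[k]$ is monotone non-decreasing in $\tau$, and $V^{\tau}_x[k] = 1$ if and only if $\tau \geq t_k + d(x,k)$, where $t_k$ denotes the iteration at which agent $k$ first satisfies its local termination criterion. The ``only if'' direction is essentially the chain argument already given in the proof of Proposition~P1 (the chain from $(i,\tau)$ back to $(k,t_k)$ is a walk of length $\geq d(x,k)$), while the ``if'' direction is a short induction on graph distance using R1. The wavefront lemma then pins the 0-to-1 transition at $i$ to $t = t_k + d(i,k)$. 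I would then induct on $d(i,k)$ to show that at this moment $T^t_i \geq t_k$: the base case $i = k$ uses R2 directly, giving $T^t_i = t_k$; in the inductive step, the neighbor $j^*$ whose $V^{t-1}_{j^*}[k] = 1$ supplied the 1 via R1 must, by the wavefront characterization and the triangle inequality, satisfy $d(j^*, k) = d(i,k) - 1$ and itself be undergoing a fresh 0-to-1 transition at iteration $t - 1$, so the inductive hypothesis gives $T^{t-1}_{j^*} \geq t_k$, and R1 then yields $T^t_i \geq T^{t-1}_{j^*} \geq t_k = t - d(i,k) \geq t - D$.

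The main obstacle will be that the scalar $T^{\tau}_x$ is \emph{not} itself monotone in $\tau$: once a node has forwarded a large $T$ value to its neighbors, a subsequent R1 max over those neighbors can cause the node's own scalar to drop below what it previously held. One therefore cannot blindly propagate a lower bound $T^{\tau}_x \geq t_k$ forward in time for arbitrary $x$ and $\tau$; the argument must keep the bound anchored at the wavefront, which is exactly where the supplying neighbor $j^*$ is itself fresh at depth $d(i,k) - 1$ and so eligible for the inductive hypothesis. Once that anchoring is justified via the wavefront lemma, the two bounds combine to contradict the hypothesis of the proposition.
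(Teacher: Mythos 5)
Your proof is correct and follows essentially the same route as the paper's: the upper bound $T^t_i \leq t$ is dispatched as immediate, and the lower bound is obtained by pinning the $0$-to-$1$ transition of $V_i[k]$ at iteration $t_k + d(i,k)$ and propagating $T \geq t_k$ along the wavefront, which is exactly the Proposition~P3-style argument the paper invokes. Your explicit treatment of the non-monotonicity of $T^{\tau}_x$ (anchoring the induction at the wavefront via the supplying neighbor $j^*$) is a genuine subtlety that the paper leaves implicit behind the phrase ``similar to the proof of Proposition~P3,'' so it strengthens rather than departs from the published argument.
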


\begin{proof}
The proof of the first condition $T^t > t$ is trivial since it is always true that $T^t \leq t$ when agent~$i$ updates any entry of the termination vector. For the second condition, we use a similar proof to Proposition~P3 to reach a contradiction. Assume $V^{t-1}_i[k] = 0$, $V^{t}_i[k] = 1$, and $T^t < t - D$. This implies that agent~$k$ satisfies the local termination criterion before iteration~$t - D$. Assume agent~$k$ satisfies the local termination criterion at iteration~$t_k$, i.e., $V^{t_k-1}_{k}[k] = 0$ and $B^{t_k}_k = 1$. Thus, $V^{t_k}_{k}[k] = 1$ and $T^{t_k}_k = t_k$ due to Rule~R2. Similar to the proof of Proposition~P3, we can show that $V^{t_k+d(k,i)-1}_i[k] = 0$, $V^{t_k+d(k,i)}_i[k] = 1$ and $T^{t_k+d(k,i)}_i \geq t_k$, for all $i \in \mathcal{A}$, where the inequality is due to the possibility that other agents may satisfy their local termination criteria after iteration~$t_k$. This implies that, at iteration $t = t_k + d(k,i)$, the local termination statuses of agent~$i$ are $V^{t-1}_i[k] = 0$, $V^{t}_i[k] = 1$, and $T^t \geq t - d(k,i)$. Since $d(i,k) \leq D$, then $T^t \geq t - D$, which contradicts the assumption that $T^t < t - D$. Thus, there is a faulty termination status.  
\end{proof}

Propositions~P4 and P5 provide sufficient conditions to diagnose the correctness of the termination statuses. Proposition~P4 detects discrepancies in the updates of the termination vectors, and Proposition~P5 bounds the updates of the termination iteration scalar. We build on Propositions P4 and P5 to develop a fault-tolerant termination method that self-correct any faulty termination status. For the fault-tolerant method to be effective, we need two additional assumptions.

\begin{assumption}[A4]
The faulty agents do not form a cut-set of the communication network.
\end{assumption}

A graph cut-set is any set $\mathcal{C} \subset \mathcal{A}$ such that removing the agents in $\mathcal{C}$ from the graph $G(\mathcal{A}, \mathcal{E})$ results in a disconnected subgraph. Assumption~A4 ensures the existence of a path between the faulted agent and other agents without passing through the faulty agents. This path permits the correct termination statuses to traverse all agents. When the faulty agents form a cut-set, they divide the communication network into two parts, and there is no way for one part to know the correct termination statuses of the other part without passing the termination status through the faulty agents.

\begin{assumption}[A5]
The distributed optimization computations are correct and accurate.
\end{assumption}

Although faults can occur in both the distributed optimization and the termination method, the proposed method focuses on faults in the termination method. We refer the reader to our other work~\cite{10012244, harris_molzahn-hicss2024, alkhraijah2023detecting} and the work in~\cite{munsing2018cybersecurity, 8629934, 9539887, 9762779, 10374181
} on detecting data manipulation in distributed optimization algorithms. Assumption A5 implies that each agent shares the accurate status for its own local termination, but not necessarily the accurate termination statuses of other agents. In Section~\ref{sec:revising_assumptions}, we will show that this assumption can be relaxed to some extent while still ensuring that the fault-tolerant method will terminate the distributed algorithm appropriately.

\subsection{Fault-Tolerant Method Procedure}
The fault-tolerant extension incorporates the conditions in Propositions P4 and P5 to clear any faulty termination status. We modify the rules in Section~\ref{sec3:methodology} and also add three new rules to prevent faulty termination. In additional to the termination vector $V_i\in\left\lbrace 0, 1 \right\rbrace^{|\mathcal{A}|}$ and termination iteration scalar $T_i\in\mathcal{T}$, agents store a \emph{termination iteration vector} $U_i\in \mathcal{T}^{|\mathcal{A}|}$ and a \emph{correction vector} $C_i \in \mathbb{Z}_{\ge0}^{|\mathcal{A}|}$. The termination iteration vector stores the iteration number when each agent satisfies its local termination criterion, and the correction vector tracks the number of iterations that are needed to reject any new update after a faulty status is detected.

Let $t$ be the current iteration and $B_i^t$ be the status of agent~$i$'s local termination criterion at iteration $t$. At iteration~$t=0$, agent~$i$ initializes the local termination status as follows:
\begin{modified_rules}[R0']
Set $V^0_i[k]= 0$, $U^0_i[k]=0$, and $C^0_i[k] = 0$, for all $k \in \mathcal{A}$, and $T^0_i = 0$.
\end{modified_rules}
At iteration $t \geq 1$, agent~$i$ receives the termination statuses $V_j^{t-1}$, $U_j^{t-1}$, and $T_j^{t-1}$ from neighboring agents~$j\in\mathcal{N}(i)$ and applies the following rules:

\begin{modified_rules}[R1']
If $B^{t}_i = 1$, $V^{t-1}_i[i] = 0$, and $C^{t-1}_i[i] =0$, then set $V^{t}_i[i] = 1$, and $U^{t}_i[i] = t$.
\end{modified_rules}
\begin{modified_rules}[R2']
For each $k \in \mathcal{A} \setminus \left\lbrace i \right\rbrace$, let $\mathcal{J}_k = \left\lbrace j \mid j \in \mathcal{N}(i), V^{t-1}_j[k] = 1, U^{t-1}_j[k] \in [t-D, t-1] \right\rbrace$. If $V^{t-1}_i[k] = 0$, $C^{t-1}_i[k] =0$, and $\mathcal{J}_k \neq \emptyset$, then set $V^{t}_i[k] = 1$ and $U^{t}_i[k] = \displaystyle \max_{j\in \mathcal{J}_k} \left\lbrace U^{t-1}_j[k] \right\rbrace$.
\end{modified_rules}
\begin{modified_rules}[R3']
Set $T^{t}_i = \displaystyle \max_{k\in \mathcal{A}} \left\lbrace U^{t}_i[k] \right\rbrace$.
\end{modified_rules}
At iteration $t \geq 2$, in addition to Rules R1'--R3', agent~$i$ applies the following rules:
\begin{modified_rules}[R4']
For each~$k\in \mathcal{A}$, if the termination vectors $V^{t-2}_i[k] = 1$ and $V^{t-1}_i[k] = 1$, and there is $V^{t-1}_j[k] = 0$ or $V^{t-1}_j[k] = 1$ with $U^{t-1}_j[k] \neq U^{t-1}_i[k]$ for any~$j\in \mathcal{N}(i)$, then set $V^{t}_i[k] = 0$, $U^{t}_i[k] = t$, $T^{t}_i = t$, and $C^t_i[k] = U^{t-1}_i[k] + 2D + |\mathcal{A}| - 1 - t$.
\end{modified_rules}%
\begin{modified_rules}[R5']
For each~$k\in \mathcal{A}$, if $C^{t-1}_i[k] > 0$, then set $C^{t}_i[k] = C^{t-1}_i[k] -1$.
\end{modified_rules}%

\begin{modified_rules}[R6']
If $\sum_{k\in \mathcal{A}}{V^{t}_i[k]} = |\mathcal{A}|$ and $t \geq T^{t}_i + 2D + |\mathcal{A}| - 1$, terminate. 
\end{modified_rules}

These rules are similar to the termination method in Section~\ref{sec3:methodology} with additional conditions that mandate clearing any faulty termination statuses. The main differences here are: 1)~only accepting updates that do not violate the conditions in Proposition~P5 as stated in Rule~R2', 2)~clearing any status that violates the conditions in Proposition~P4 as stated in Rule~R4', 3)~after a fault is detected, rejecting any new update for a certain number of iterations computed by Rule~R4' and updated by Rule~R5', and 4)~accounting for the passing of the correction termination status from the faulted agent to the other agents in the termination conditions as stated in Rule~R6'.

Selecting the number of iterations to reject new updates plays an important role in the correctness of the fault-tolerant method as we will show in the correctness proofs. When an agent detects a fault via Proposition~P4, the agent keeps the termination status vector equal to zero for the next $C^{t}_i[k] = U^{t-1}_i[k] + D + |\mathcal{A}| - 1 - t$ iterations due to Rules~R4' and R5'. During these iterations, agents clear the faulty termination status before allowing any change to the termination status. The use of the iteration vector $U^{t-1}_i[k]$ ensures that all agents use the same iteration number of the faulty status to compute $C^{t}_i[k]$. Thus, the value of $C^{t}_i[k]$ will be exactly the same for all agents that receive the same faulty status. 

The fault-tolerant method requires storing additional information from the previous iterations. Specifically, each agent needs to store the termination vectors from the last two iterations ($V_i\in \left\lbrace 0,1 \right\rbrace^{|\mathcal{A}|}$), the termination iteration vector ($U_i\in \mathbb{Z}_{\ge0}^{|\mathcal{A}|}$), the termination iteration scalar ($T_i\in \mathbb{Z}_{\ge0}$), and the correction vector ($C_i\in \mathbb{Z}_{\ge0}^{|\mathcal{A}|}$). Thus, the total stored data for agent~$i$ is $2|\mathcal{A}|$ binaries and $2|\mathcal{A}|+1$ non-negative integers. 

\subsection{Fault-Tolerant Method Correctness}
Since Proposition~P1 does not hold when there is a faulty termination status, we can not use Proposition~P3 to prove that the algorithm will terminate appropriately. We will instead use the following three propositions to show the correctness of the fault-tolerant extension. We first bound the maximum number of iterations for any faulty status to persist in the termination vectors. Then, we show that no faulty status can terminate the algorithms. Lastly, we show that the algorithm terminates appropriately if there are no new fault injections.

\begin{proposition}[P6]
The maximum number of iterations that any faulty termination status will persist in any termination vector ($V_i$, for any $i\in\mathcal{A}$) is at most $D + |\mathcal{A}| - 2$ iterations. 
\end{proposition}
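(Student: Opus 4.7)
My plan is to bound separately (i)~how far a faulty status can spread through the network and (ii)~how long the clear takes to reach any agent, and then combine these to bound the persistence at any fixed agent~$i$. I would begin by considering a single injection by a faulty agent~$f$ at iteration~$t_0$ that sets $V_f[k]=1$ with stamp $U_f[k] = t_0$; more general injections decompose into such primitives, and re-injections are handled by the correction counter as discussed below. Using the acceptance window $U^{t-1}_j[k]\in[t-D,t-1]$ in Rule~R2' (essentially Proposition~P5 in disguise), the stamp $t_0$ is rejected by every non-faulty agent once $t > t_0 + D$. Together with R2's one-hop-per-iteration propagation, this implies that $V^t_i[k]$ carrying stamp $t_0$ can first become $1$ only at time $t_0 + d(f, i)$ and only for agents with $d(f,i) \leq D$.

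Next, I would argue that the clear is initiated near the faulted agent~$k$ and then propagates across the non-faulty subgraph. By Assumption~A5, $B^t_k=0$ throughout the relevant window, so Rule~R1' forces $V^t_k[k]=0$ for every $t$; by Assumption~A4, $k$ has at least one non-faulty neighbor~$j_k$ and more generally the non-faulty agents induce a connected subgraph containing $k$. Either the fault reaches $j_k$, in which case Rule~R4' fires at $j_k$ at iteration $t_0 + d(f,j_k) + 2$ using the $V_k[k]=0$ discrepancy; or the fault fails to reach $j_k$, in which case some non-faulty agent at the boundary of the $D$-ball around $f$ meets an uninfected neighbor and triggers R4' no later than iteration $t_0 + d(f,j_k) + 2$. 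From any such clear source, the value $V=0$ then propagates at one hop per iteration through the non-faulty subgraph via R4', because every non-faulty agent with two consecutive iterations of $V=1$ clears as soon as it observes a $V=0$ neighbor.

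Combining these observations, the persistence of the faulty status at agent~$i$ is at most $d(f,j_k) + 2 + d_{\mathrm{nf}}(j_k, i) - d(f, i)$, where $d_{\mathrm{nf}}$ denotes distance in the non-faulty subgraph. Using $d(f,j_k) \leq D$, bounding any simple path in the non-faulty subgraph by $|\mathcal{A}|-2$ edges (since $f$ and at least one further vertex are excluded from a simple path between two non-faulty endpoints), and using the triangle inequality in $G$ to trade $-d(f,i)$ against the non-faulty path from $j_k$ to $i$, would yield the claimed bound $D + |\mathcal{A}| - 2$. The hardest part will be the careful bookkeeping of R4's two-iteration trigger delay together with the adversarial freedom of $f$ to pick stamps and to re-inject fresh $(V=1,U=t_1)$ statuses once the counter $C$ expires via Rule~R5'; here I would exploit the specific form $C^t_i[k] = U^{t-1}_i[k] + 2D + |\mathcal{A}| - 1 - t$, which was tailored so that every non-faulty agent exits rejection for a given stamp synchronously, to rule out any re-injection of the \emph{same} stamp extending the lifetime of a stamped faulty status beyond the stated horizon.
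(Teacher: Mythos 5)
Your overall strategy is essentially the one the paper uses: the acceptance window $U^{t-1}_j[k]\in[t-D,t-1]$ in Rule~R2' confines the faulty stamp to a spread of at most $D$ iterations, the faulted agent~$k$ (whose own entry stays at zero by Assumption~A5) seeds the correction, Rule~R4' propagates the zero one hop per iteration through non-faulty agents, and the persistence bound is obtained by combining the fault's arrival time with the clear's arrival time. The organizational difference is that the paper anchors the whole argument to a single path $P(i,k)$ from the arbitrary agent~$i$ to the faulted agent~$k$ that avoids the faulty agent, locates the rejection point $i_r$ on that path at absolute time at most $t_j+D+1$, and propagates the clear back along the \emph{same} path ($r\le R\le|\mathcal{A}|-2$ edges); you instead measure the fault's reach by $d(f,i)$ and route the clear from a non-faulty neighbor $j_k$ of $k$ through the non-faulty subgraph. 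Both are legitimate, but the paper's single-path bookkeeping makes the cancellation that yields $D+|\mathcal{A}|-2$ fall out automatically.

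That cancellation is exactly where your version has a gap. With the ingredients you actually establish --- $d(f,j_k)\le D$, a clear-initiation delay of $2$, $d(f,i)\ge 1$, and a simple non-faulty path of at most $|\mathcal{A}|-2$ edges (only $f$ excluded) --- your expression $d(f,j_k)+2+d_{\mathrm{nf}}(j_k,i)-d(f,i)$ evaluates to at most $D+|\mathcal{A}|-1$, one more than claimed. You assert that ``at least one further vertex'' is excluded from the clear path, but you never say which one or why, and the triangle-inequality trade you invoke does not obviously supply the missing unit either. The missing observation is that the clear path from $j_k$ to $i$ can always be taken to avoid $k$ itself: $k$ never holds the faulty status, so every non-faulty neighbor of $k$ that is infected clears directly from $k$'s zero entry, and among these you may choose as $j_k$ the one closest to $i$ in the non-faulty subgraph --- if the shortest such path passed through $k$, the neighbor of $k$ on the $i$-side would be a strictly better choice. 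This forces $d_{\mathrm{nf}}(j_k,i)\le|\mathcal{A}|-3$ and recovers $D+|\mathcal{A}|-2$, which is tight per the paper's Figure~1 example, so the extra unit cannot simply be absorbed. Until that step is supplied, the proof as written only yields $D+|\mathcal{A}|-1$; the remaining loose ends you flag (the two-iteration trigger of R4' interacting with late-infected agents, and re-injection of the same stamp) are handled at the same level of informality as the paper's own proof and I would not count them against you.
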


We prove Proposition~P6 by tracing the path of the faulty termination status from the faulty agent to the faulted agent passing through an arbitrary agent using Rule~R2', and then tracing back the correction status from the faulted agent to the same arbitrary agent using Rule~R4'.

\begin{proof}
Without loss of generality, assume there is a faulty agent~$j$, a faulted agent~$k$ ($j \neq k$ due to Assumption~A5), and the fault is injected at iteration $t_j$. Consider any arbitrary agent~$i$ that receives the faulty status from agent~$j$ at iteration~$t_i$ and let $M = t_i - t_j$, i.e., the length of the shortest path between $j$ and $i$ such that the faulted agent $k$ is not in that path. We want to show that this faulty status will be cleared at iteration $t^{\prime}_i \leq t_i + D + |\mathcal{A}| - 2$. 

Let $P(i,k)= \left\lbrace i = i_0, i_1, \ldots , i_r, \ldots, i_R = k \right\rbrace$ be the shortest path between $i$ and $k$ with length $R$ such that $j \notin P(i,k)$ (this path exists due to Assumption A4). Agents along this path either receive or do not receive the faulty status at iteration $t_i = t_j + M$. At the next $S$ iterations, one or more agents in the path $P(i,k)$ will receive the faulty status for the first time at each iteration. At iteration $t_i + s$, where $s \leq S$, we can trace the agents that receive the faulty status along the path $P(i,k)$ starting from agent~$i$ until some agent $i_r \in P(i,k)$ that did not receive the faulty status yet. The next iteration $t_i + s + 1$, agent $i_r$ will either receive the faulty status or reject the faulty status. The rejection could happen because one of two possibilities: 1) $M + s = D + 1$ due to the condition in Rule R2' (reject any new update that takes more than $D$ iterations) or 2) agent~$i_r$ received and cleared the faulty status before iteration $t_i + s$ and $C_{i_r}^{t_i + s}[k] > 0$ (if $C_{i_r}^{t_i +s}[k] > 0$ agent $i_r$ rejects any new update due to Rule R2'). Eventually, there will be an agent $i_r\in P(i,k)$ that will reject the faulty status update at iteration $S$ with $M + S \leq D + 1$ since the faulted agent $k\in P(i,k)$. 

This implies the termination vector of agent $i_{r-1} \in P(i,k)$ (i.e., $i_{r-1}$ is the neighbor of $i_r$ in the path $P(i,k)$ with shortest distance to agent $i$) is $V_{i_{r-1}}^{t_i + S + 1}[k] = 0$ due to Rule R4'. The next iterations, agents along the path $P(i,i_{r})\subseteq P(i,k)$ will also clear the faulty status leading to $V_{i_{0}}^{t_i + S + r}[k] = 0$ due to Rule R4'. This implies agent $i$ will clear the faulty status at iteration $t^{\prime}_i = t_i + S + r \leq t_i + D + |\mathcal{A}| - 2$ since $S \leq D + 1 - M$, $M \geq 1$, and $r \leq R \leq |\mathcal{A}| - 2$. Thus, the maximum number of iterations for any faulty status to persist in agent~$i$'s termination vector is $t^{\prime}_i - t_i \leq D + |\mathcal{A}| - 2$.
\end{proof}

We can use the same proof to show that the same bound also applies when there are multiple faulty agents. With multiple faulty agents, we need to differentiate between two cases. If the faulty agents collude in the sense that they use the same value for the faulty termination iteration vector $U_j[k]$, then the exact same proof applies. On the other hand, if the faulty agents use different values for the faulty termination iteration vector $U_j[k]$, then when following the path $P(i,k)$ in the proof, an agent $i_{r-1}\in P(i,k)$ might clear the faulty status because of the discrepancy in the values of the termination iteration vectors with agent $i_r \in P(i,k)$ due to Rule R4'. Note that the bound in the proposition is the tightest bound for any general graph as shown in the example in Figure~\ref{fig:p6_example}. Next, we use Proposition~P6 to show that no faulty status can terminate the distributed algorithm.
\begin{figure}[htbp]
    \centering
    \includegraphics[width=3in]{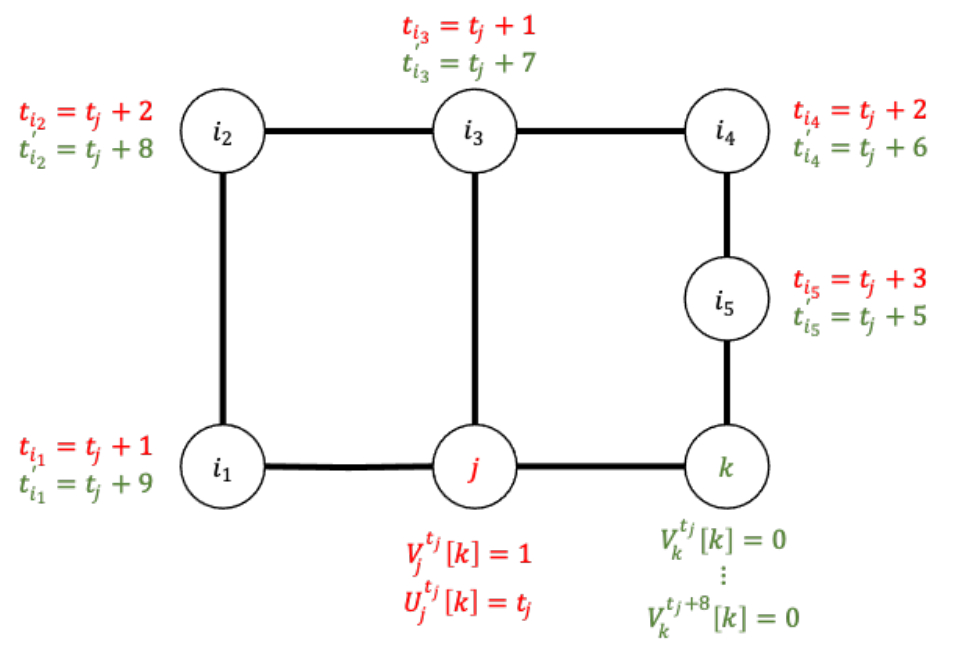}
	\caption[exampleP6]{An example for a faulty status where the bound in Proposition~P6 is tight. The number of agents $|\mathcal{A}| = 7$ and the communication network diameter~$D = 3$. The fault originates from agent~$j$ (in red) at iteration $t_j$ and the faulted agent is~$k$. At iteration~$t_j+3$, all agents receive the faulty status (in red). Agent~$i_5$ is the first agent that clears the faulty status since agent~$i_5$ is a neighbor to the faulted agent~$k$. Agents clear the faulty status at iteration $t^{\prime}_i$ (in green). At iteration~$t_j + 9$, agent~$i_1$ is the last agent to clear the faulty status. Thus, the number of iterations agent~$i_1$ needs to clear the faulty status is $D + |\mathcal{A}| - 2 = 8$ iterations.}
	\label{fig:p6_example}
\end{figure}

\begin{proposition}[P7]
No agent will terminate the computations before satisfying the global termination criterion.
\end{proposition}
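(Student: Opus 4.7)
The plan is to argue Proposition~P7 by contradiction. Suppose agent~$i$ terminates at some iteration~$t$ via Rule~R6', yet the global termination criterion is not satisfied, so some agent~$k$ has $B_k^t = 0$. Since R6' requires $\sum_{k \in \mathcal{A}} V_i^t[k] = |\mathcal{A}|$, we must have $V_i^t[k] = 1$, and this entry must be a faulty report. The goal is to show that such a faulty entry is incompatible with the iteration-gap condition $t \geq T_i^t + 2D + |\mathcal{A}| - 1$ also required by R6'.

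First I would dispose of the sub-case $k = i$: the entry $V_i[i]$ is written only by Rule~R1' (R2' is restricted to $k \in \mathcal{A} \setminus \{i\}$, and R4' can only write~$0$), and R1' writes $V_i^t[i] = 1$ only when $B_i^t = 1$. Combined with Assumption~A3, this rules out a faulty self-entry, so the faulty index must satisfy $k \neq i$.

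For $k \neq i$, let $\tau \leq t$ be the most recent iteration at which $V_i[k]$ transitioned from~$0$ to~$1$. This transition must have been caused by Rule~R2', whose admissibility set $\mathcal{J}_k$ forces $U_i^\tau[k] \in [\tau - D, \tau - 1]$. While $V_i[k]$ stays at~$1$, neither R2' (which requires $V_i^{t-1}[k] = 0$ to fire) nor R4' (which would itself reset $V_i[k]$ to~$0$, ending the persistence window) rewrites $U_i[k]$, so $U_i^t[k] = U_i^\tau[k] \geq \tau - D$. Proposition~P6 bounds the persistence window by $t - \tau \leq D + |\mathcal{A}| - 2$. Chaining these inequalities with Rule~R3' yields
\[
  T_i^t \;\geq\; U_i^t[k] \;\geq\; \tau - D \;\geq\; t - 2D - |\mathcal{A}| + 2,
\]
so $t - T_i^t \leq 2D + |\mathcal{A}| - 2$. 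This contradicts the R6' requirement $t - T_i^t \geq 2D + |\mathcal{A}| - 1$, closing the argument.

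The main obstacle I expect is the bookkeeping in the third step: I must verify that, within a single persistence window, no rule rewrites $U_i[k]$ to a value large enough to break the lower bound on $T_i^t$. This requires a careful case analysis of the interplay among Rules~R2', R4', and R5' together with the correction counter $C_i[k]$, in order to confirm that each persistence window is opened by exactly one R2' write of $U_i[k]$ and that any intervening R4' firing would terminate the window (and reset $V_i[k]$ to~$0$, forcing the counter $C_i[k]$ to block a reinjection for the remainder of the window). A secondary concern is re-checking that the bound from P6 continues to hold when multiple faulty agents are present (as discussed immediately after the proof of~P6), since P7 places no restriction on the number of faulty agents beyond Assumption~A4.
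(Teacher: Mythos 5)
Your proposal is correct and follows essentially the same route as the paper's proof: both arguments combine the persistence bound from Proposition~P6 with the admissibility window enforced by Rule~R2' to show that any faulty entry is cleared strictly before the timing condition $t \geq T_i^t + 2D + |\mathcal{A}| - 1$ of Rule~R6' can be met. Your bookkeeping is in fact slightly tighter than the paper's (you track $U_i^t[k]$ through Rule~R3' and dispose of the $k=i$ sub-case explicitly, where the paper works with $T_i^{t_i}$ at the receipt iteration and leaves the self-entry case to Assumption~A5), but the key idea and the resulting inequality chain are the same.
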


The proof of Proposition~P7 is a direct consequence of Proposition~P6. We only need to show that any fault will be cleared before the termination condition in Rule~R6' is true. 

\begin{proof}
The only way for any agent to terminate the computation before satisfying the global termination criterion is to have a faulty status in the termination vector. Assume agent~$i$ receives a faulty termination status at iteration $t_i$. Since the maximum number of iterations for any faulty status to persist in the termination vectors is $D + |\mathcal{A}| - 2$ as stated in Proposition~P6, the iteration $t^{\prime}_i$ at which agent~$i$ clears the faulty status must be no later than iteration $t_i + D + |\mathcal{A}| - 2$. From Rules~R2' and R3', we know that $T_i^{t_i} \geq t_i - D$, otherwise agent $i$ will not receive the faulty status due to Rule~R2'. Thus, agent~$i$ terminates using Rule~R6' by at least iteration $t \geq T_i^{t_i} + 2D + |\mathcal{A}| - 1 \geq t_i + D + |\mathcal{A}| - 1$. However, since agent~$i$ clears the faulty status at iteration $t^{\prime}_i \leq t_i + D + |\mathcal{A}| - 2 < t_i + D + |\mathcal{A}| - 1 \leq t$, agent~$i$ will not terminate by any faulty termination status. Thus, the algorithm will not terminate before satisfying the global termination criterion.
\end{proof}

When a faulty agent keeps injecting faulty termination status, the algorithm will not terminate, as implied by Proposition~P7. However, if the faulty agent stops injecting faulty termination statuses, Rules~R4' and R5' ensure clearing the faulty termination status in all agents' local termination vectors ($V_i$, for all $i\in\mathcal{A}$). If all agents clear the faulty termination status, then Propositions~P1 and P2 are valid, and the algorithm terminates appropriately using Rule~R6'. The last proposition establishes that Rules~R4' and R5' clear the faulty termination status if there is no new fault injections.

\begin{proposition}[P8]
If the distributed algorithm satisfies the global termination criterion at iteration $T_G$ and there is no fault injections at any iteration $t \geq T_G$, then the algorithm will terminate appropriately.
\end{proposition}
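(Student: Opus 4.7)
The plan is to combine Proposition~P6 (bounding fault persistence) with an argument analogous to the proof of Proposition~P3 (handling fault-free propagation). Since no new faults are injected after $T_G$, there is a bounded iteration after which every agent's state exactly reflects the true local termination statuses, and the original propagation argument then yields simultaneous termination via Rule~R6'.

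First, I would bound the latest iteration at which any pre-$T_G$ faulty status can still reside in any termination vector. By the stamp condition in Rule~R2', a faulty update injected at some iteration $t_j \leq T_G - 1$ can first be accepted by any agent only at some iteration at most $t_j + D \leq T_G + D - 1$; by Proposition~P6, it is then cleared within at most $D + |\mathcal{A}| - 2$ further iterations, so by some iteration $t^{\star} \leq T_G + 2D + |\mathcal{A}| - 3$ every termination vector $V^t_i$ is free of pre-$T_G$ faults. I would then observe that Rule~R4' sets $C^t_i[k] = U^{t-1}_i[k] + 2D + |\mathcal{A}| - 1 - t$, so the correction counter for a faulty status with stamp $u \leq T_G - 1$ expires by iteration $u + 2D + |\mathcal{A}| - 1 \leq T_G + 2D + |\mathcal{A}| - 2$. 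From this iteration onward every correction counter is zero, Propositions~P1 and P2 apply, and the agents' active rules reduce to R1'--R3' and R6', which mirror R1--R3 of the basic method.

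Next, I would invoke the Proposition~P3 argument in this cleaned-up regime. By Assumption~A5 every agent correctly applies Rule~R1' to its own entry, so once its correction counter reaches zero and $B^t_i = 1$ holds (which persists by Assumption~A3), agent~$i$ re-establishes $V^t_i[i] = 1$. These correct signals then propagate along paths avoiding the faulty agents (which exist by Assumption~A4) to every other agent in at most $D$ iterations via Rule~R2', and Rule~R3' synchronizes the termination iteration scalars $T^t_i$ to a common value. Hence, by some iteration bounded in terms of $T_G$, $D$, and $|\mathcal{A}|$, we have $\sum_{k\in\mathcal{A}} V^t_i[k] = |\mathcal{A}|$ for every agent~$i$ with a common $T^t_i$, and the condition $t \geq T^t_i + 2D + |\mathcal{A}| - 1$ in Rule~R6' is met simultaneously by all agents, giving appropriate termination.

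The main obstacle will be the bookkeeping around agents whose own entry $V^t_i[i]$ is cleared by Rule~R4' during the fault-propagation window and must be re-established later via Rule~R1', and around the fact that after re-establishment the stamps $U^t_i[i]$ may exceed $T_G$ (so the synchronized $T^t_i$ in the final stage is not $T_G$ itself but a later iteration). This is precisely why Rule~R6' uses the buffer $2D + |\mathcal{A}| - 1$: it must absorb the $D + |\mathcal{A}| - 2$ fault-persistence bound from Proposition~P6, the additional iterations needed for R1' to re-fire after correction counters expire, and the $D$ iterations required to re-propagate the corrected signal across the network. Verifying that these constants line up exactly, and that R6' cannot be triggered prematurely by any residual counter or transient signal, is the delicate part, but it is careful accounting rather than a conceptually difficult step.
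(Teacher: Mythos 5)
Your overall architecture matches the paper's: invoke Proposition~P6 to bound how long pre-$T_G$ faults persist, then run a Proposition~P3-style propagation argument once the state is clean. However, there is a genuine gap in the middle, and it is precisely the part you defer as ``careful accounting.'' The hard case is when the faulted agent~$k$ is the last agent to satisfy its local criterion, so its \emph{correct} status (stamp $t_k = T_G$) begins propagating while other agents are still inside the rejection windows created by clearing the earlier fault. Those agents reject the correct status via Rule~R2' (their $C[k]>0$), which creates a boundary between agents that accepted the correct status and agents that did not; Rule~R4' then fires \emph{on the correct status itself}, clearing it and setting fresh correction counters. Your claim that after iteration $T_G + 2D + |\mathcal{A}| - 2$ ``every correction counter is zero'' and ``the agents' active rules reduce to R1'--R3' and R6''' is therefore not established: counters seeded by clearing the correct status (stamp $T_G$) expire only at $T_G + 2D + |\mathcal{A}| - 1$, and a second full round of clear-and-repropagate occurs after that. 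One must also rule out the possibility that this establish--clear--re-establish cycle repeats indefinitely.

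The paper closes this gap with a specific structural argument you do not supply: it partitions the agents into the set $\mathcal{I}$ that received the correct status within $D$ iterations of $t_k$ and its complement $\mathcal{I}^{\mathsf{c}}$, uses connectivity of the partition boundary to show that Rule~R4' forces \emph{all} of $\mathcal{I}$ to also clear the correct status, and then exploits the key quantitative fact that the reset time $t'' = U + 2D + |\mathcal{A}| - 1$ depends only on the stamp $U$ of the cleared status, not on when each individual agent cleared it. Hence every agent exits its rejection window at the same iteration $t''_k = t_k + 2D + |\mathcal{A}| - 1 > t''$, after which a single clean propagation with a common stamp succeeds and Rule~R4' can never fire again. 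Without this synchronization observation, the final appeal to the P1--P3 machinery is not justified. Your proposal correctly identifies where the difficulty lives, but the resolution is a conceptual step (the partition plus stamp-determined synchronization), not bookkeeping, and it is missing from your argument.
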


We prove Proposition~P8 by showing that whenever there is a faulty status, there will be a future iteration where all agents clear that faulty status, and this iteration must be before any agent terminates the computation due to the faulty status. We achieve this result using the correction vector $C_i$, which prevents agents from accepting any new updates until waiting a certain number of iterations after detecting a faulty status.

\begin{proof}
Without loss of generality, assume there is a fault injection by a faulty agent~$j$ and a faulted agent~$k$ ($k\neq j$ due to Assumption A5). Further, assume the faulted agent is the last agent to satisfy its local termination criterion and the faulty agent stops injecting the faulty status before the faulted agent~$k$ satisfies its local termination criterion. Denote the $k$-th entry of the faulty termination iteration vector as $U_j[k] = t_j$. For any arbitrary agent~$i$ that receives the faulty status at iteration $t_i$, the local termination statuses are $V_i^{t_i}[k] = 1$, $U_i^{t_i}[k] = t_j$, and $t_i \leq t_j + D$ due to Rule R2'.

From Proposition~P6, agent $i$ clears the faulty status by iteration $t^{\prime}_i \leq t_i + D + |\mathcal{A}| - 2$. The local termination statuses at this iteration are $V_i^{t^{\prime}_i}[k] = 0$ and $C_i^{t^{\prime}_i}[k] = U_i^{t_i} + 2D + |\mathcal{A}| - 1 - t^{\prime}_i = t_j + 2D + |\mathcal{A}| - 1 - t^{\prime}_i$. Since $t^{\prime}_i \leq t_i + D + |\mathcal{A}| - 2$ and $t_i \leq t_j + D$, then $t^{\prime}_i \leq t_j + 2D + |\mathcal{A}| - 2$. This implies $C_i^{t^{\prime}_i}[k] \geq t_j + 2D + |\mathcal{A}| - 1 - (t_j + 2D + |\mathcal{A}| - 2) = 1 > 0$. Since $C_i^{t^{\prime}_i}[k] > 0$, agent~$i$ will not update its local termination status during the next $C_i^{t^{\prime}_i}[k]$ iterations. Let $t^{\prime \prime}$ be the last iteration where agent $i$ will not update its termination status. We have $t^{\prime \prime} = t^{\prime}_i + C_i^{t^{\prime}_i}[k] = t_j + 2D + |\mathcal{A}| - 1$. Since $t^{\prime \prime}$ only depends on the faulty status, then the termination vector of any agent $i\in\mathcal{A}\setminus \left\lbrace j \right\rbrace$ that receives the faulty status is $V_i^{t^{\prime \prime}}[k] = 0$. Thus, all agents clear the faulty status and do not accept any updates until iteration $t^{\prime \prime}$ if the faulted agent $k$ does not satisfy its termination criterion.

Let $t_k$ be the iteration when the faulted agent $k$ satisfies its local termination criterion. If $t_k \geq t^{\prime \prime}$, then we can use Propositions P1--P3 to show that the algorithm terminates appropriately. We still need to show that the algorithm terminates appropriately if $t_k < t^{\prime \prime}$. 

If $t_k < t^{\prime \prime}$, we can partition the agents into two sets based on whether they received the correct termination status. Let $\mathcal{I} = \left\lbrace i \mid i\in \mathcal{A}, V^{t_i}_i[k] = 1,  U^{t_i}_i[k] = t_k, t_i \leq t_k + D  \right\rbrace$ be the set of agents that receive the correct termination status before iteration $t_k + D$, and let $\mathcal{I}^\mathsf{c} = \mathcal{A} \setminus \mathcal{I}$ be the remaining agents. 

If $\mathcal{I}^\mathsf{c} = \emptyset$, then we can use Propositions~P1--P3 to show that the algorithm terminates appropriately. On the other hand, if $\mathcal{I}^\mathsf{c} \neq \emptyset$, then we know that there is an iteration $t^{\prime \prime}$ where $V_i^{t^{\prime \prime}}[k] =0$, for all $i \in \mathcal{I}^\mathsf{c}$. Since $\mathcal{I}$ and $\mathcal{I}^\mathsf{c}$ form a graph partition, we always have some agent $i\in \mathcal{N}(i')$ where $i\in\mathcal{I}$ and $i'\in\mathcal{I}^\mathsf{c}$. Thus, agents in $\mathcal{I}$ will also clear the correct termination status due to Rule R4'. Moreover, those agents will not receive any new updates until some iteration $t^{\prime \prime}_k = t_k + 2D + |\mathcal{A}| -1$. Since $t_k > t_j$ by assumption, then $t^{\prime \prime}_k > t^{\prime \prime}$. Thus, we have $V^{t^{\prime \prime}_k}_i[k] =0$, for all $i \in \mathcal{A}$. We can then use Propositions~P1--P3 to show that the algorithm terminates appropriately in this case.
\end{proof}

The same proof applies when there are multiple faulty agents. The only restrictions are that the faulty agents do not form a cut-set as Assumption A4 states, and the faulty agents do not terminate the computation before satisfying their local termination criterion as Assumption A5 implies.

\section{Simulation Results} \label{sec5:result}
To demonstrate the proposed termination method, we use the Alternating Direction Method of Multipliers (ADMM) algorithm to solve the Optimal Power Flow problem (OPF) problem. OPF problems are used to optimize the dispatch of generators in electric power grids. We consider a 240-bus test case with 22 areas from PGLib-OPF benchmark library~\cite{PGLib}. Each agent controls an area and communicates with neighboring agents when there is an electric transmission line connecting the two areas. As shown in Figure~\ref{fig:network}, the communication network consists of 36 links and has a diameter $D=7$. We decompose the power system into multiple subsystems using the method in~\cite{pmada2023} and, for illustrative purposes, solve the DC approximation of the OPF problem. For the local termination criterion, we use the shared variables' mismatches (i.e., the differences in the voltage phase angles of the shared buses between neighboring agents) having an $\ell_{\infty}$-norm less than $10^{-2}$ radians.
\begin{figure}
    \centering
    \includegraphics[width=\linewidth]{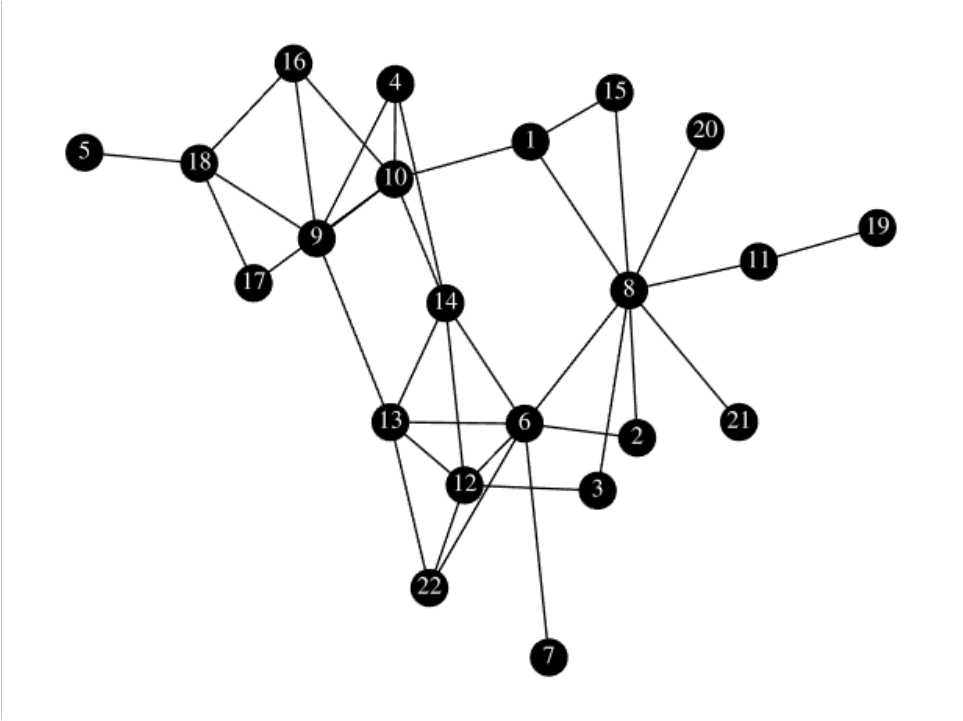}
    \caption{Communication network of the 240-bus power system with 22 agents. The nodes are the agents and the edges are the communication links.}
    \label{fig:network}
\end{figure}

The agents satisfy the global termination criterion for the first time after 862 iterations. Agent~8 is the last agent to satisfy its local termination criterion at this iteration. The algorithm then terminates using the proposed method at iteration 869, seven iterations after the the global termination criterion is satisfied. Figure~\ref{fig:termination} shows the agents' termination statuses for the last nine iterations. Information regarding satisfaction of the global termination status takes five iterations to traverse all the agents. However, based on the diameter of the communication network ($D=7$), the agents perform two additional iterations before terminating to ensure that all the agents are informed about satisfaction of the global termination criterion. 

\begin{figure*}
    \centering
    \includegraphics[width=\linewidth]{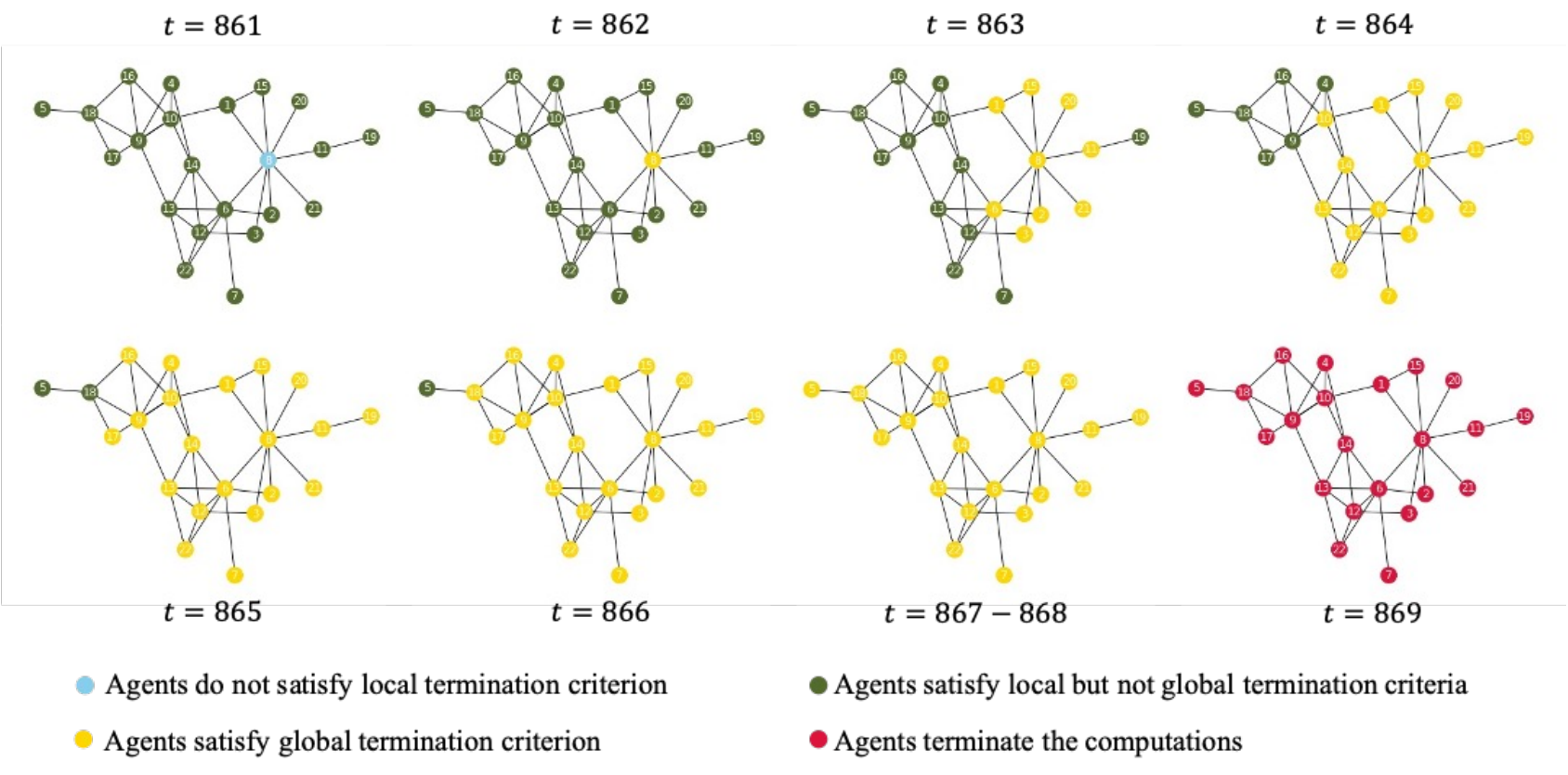}
	\caption[attacker]{Termination status for the last nine iterations of the 240-bus optimal power flow test case solved using the alternating direction method of multipliers and the proposed termination method. At iteration $t=861$, all agents satisfy their local termination criterion (green nodes) expect agent~8 (cyan node). Agent~8 satisfies its local termination criterion at iteration $t=862$ and acknowledges the global termination criterion (becomes yellow). During iterations $t=863~\text{--}~868$, the global termination criterion then traverses the other agents (they become yellow). In iterations~$t=867~\text{--}~868$, all agents have received the global termination criterion (all nodes are yellow) but the agents do not terminate the computation since they are not yet assured that the global termination status information has reached all agents. This occurs at iteration~$t=869$, at which point all agents terminate the computation appropriately (red nodes).}
	\label{fig:termination}
\end{figure*}

To demonstrate the fault-tolerant extension of the termination method described in Section~\ref{sec4:fault_termination}, we next consider a case where some agents inject faulty termination statuses. Faulty agents inject a faulty termination every $100$ iterations by setting $V_f^{t_f}[k] =1$, for all $k \in \mathcal{A}$, $f \in \mathcal{F}$, and $t_f \in \left\lbrace 100, 200, 300, 400, 500, 600, 700, 800 \right\rbrace$, where $\mathcal{F}$ is the set of faulty agents. Moreover, we have the faulty agents send faulty status for 20 iterations. To maximize the possibility of terminating the algorithm, we set the value of the termination iteration vector to be $U_f^{t_f}[k] = U_f^{t_f-1}[k]$, for all $k \in \mathcal{A}$, if $V_f^{t_f-1}[k] =1$ and $U_f^{t_f}[k] = t_f$ otherwise. We stop injecting the faulty termination status after iteration 820 to verify that the algorithm will then terminate appropriately. 

Table~\ref{tab:faulty_result} summarizes the simulations for cases with one to five faulty agents. The distributed algorithm terminates in 897 iterations in all cases. The maximum number of iterations that the agents presume the global termination criterion is satisfied because of the faulty termination status is seven iterations. Furthermore, the maximum number of iterations for any single fault to persist in any local termination vector other than the faulty agent is eight iterations. Despite the presence of faulty agents, the algorithm terminates appropriately in all cases.

Figure~\ref{fig:faulty} shows the number of agents that satisfy the local and the global termination criteria when five faulty agents collude in an attempt to terminate the computation before the global termination criterion is satisfied. Observe that the number of agents that receive the faulty global termination criterion spikes after any faulty status is injected by the faulty agents and returns to zero after a few iterations.
\begin{table}[h]
\centering
\caption{Faulty Test Case Results Summary}
\begin{tabular}{|c|c|c|c|}
\hline
Faulty Agents &
  \begin{tabular}[c]{@{}c@{}}Termination \\ Iteration\end{tabular} &
  \begin{tabular}[c]{@{}c@{}}Maximum Single \\ Fault Persists\end{tabular} &
  \begin{tabular}[c]{@{}c@{}}Maximum Global\\ Fault Persists\end{tabular} \\ \hline\hline
1              & 897 & 7 & 6 \\ \hline
1, 3           & 897 & 7 & 5 \\ \hline
1, 3, 4        & 897 & 7 & 5 \\ \hline
1, 3, 4, 9     & 897 & 8 & 7 \\ \hline
1, 3, 4, 9, 14 & 897 & 8 & 7 \\ \hline
\end{tabular}
\label{tab:faulty_result}
\end{table}
\begin{figure}[h]
    \centering
    \includegraphics[width=\linewidth]{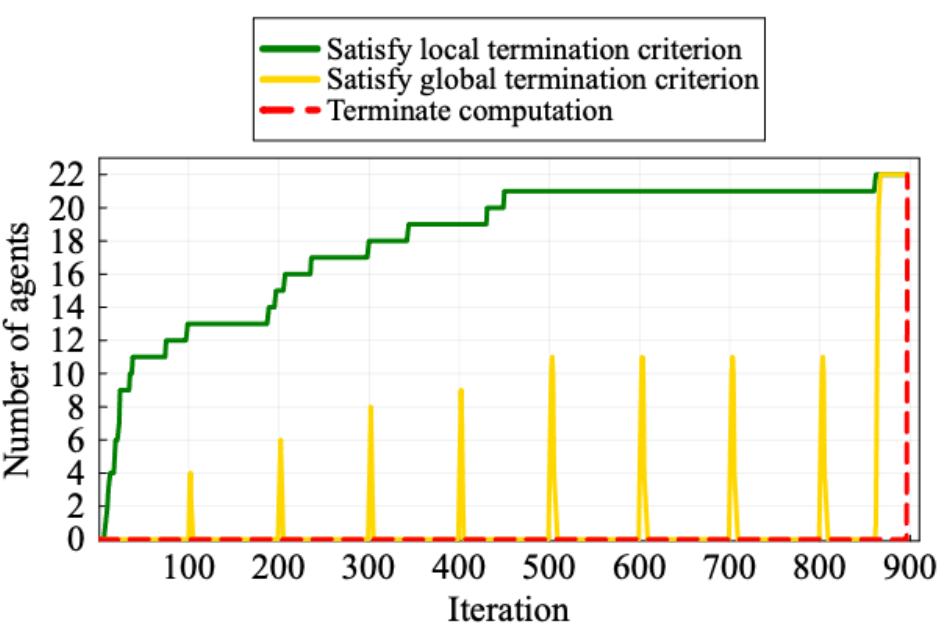}
	\caption[faulty]{Number of agents that satisfy the local (green) and global (yellow) termination criteria and terminate computation (red) when five faulty agents inject faulty statuses for the first 20 of every 100 iterations, stopping at iteration 820. The algorithm then appropriately terminates at iteration 897.}
	\label{fig:faulty}
\end{figure}

\section{Remarks and Extensions} \label{sec6:discussion}
This section highlights several remarks and introduces extensions of the proposed method to reduce the computational burden, detect faulty agents, revise assumptions, and terminate asynchronous distributed optimization algorithms.

\subsection{Reducing Additional Iterations}
The proposed method in Section~\ref{sec3:methodology} terminates distributed optimization algorithms after $D$ iterations of satisfying the global termination criterion to ensure that the correct termination status is communicated to all agents. Moreover, the fault-tolerant termination method in Section~\ref{sec4:fault_termination} uses an additional $2D + |\mathcal{A}|-1$ iterations to prevent faulty early termination. The additional $D+|\mathcal{A}|-1$ iterations are thus a trade-off between reliability and computational burden. 

We can reduce the number of computational iterations per agent while maintaining the fault tolerance guarantees. Rather than performing the local computations after an agent satisfies the global termination criterion, the agent can stop performing the normal computations and instead only communicate the termination status. In this case, once all agents are assured to have received the global termination status, the agents terminate and use the solution from iteration $T_G$ instead of the last iteration's solution. If there are any discrepancies in the termination statuses due to a faulty agent, the agents continue from where they stopped. 

This modification will reduce the number of additional computations needed in the fault-tolerant method by more than $66\%$. This reduction comes from the fact that all agents receive the global termination status in a maximum of $D$ iterations. In the remaining $D + |\mathcal{A}|-1$ iterations, the agents participate in the termination method only, which has a negligible computational burden compared with solving the agent's local optimization problem. Thus, the fraction of avoided additional computations is $\frac{D+ |\mathcal{A}|-1}{2D + |\mathcal{A}|-1}$, which is lower bounded by 0.66.

\subsection{Detecting Faulty Agents}
The fault-tolerant method self-corrects any faulty status but does not provide information about the faulty agents. If faulty agents continue sending a faulty termination status, we can show that the faulty agents can be detected by their neighbors. Specifically, for some iteration~$t$ and agent~$i$, if $V^{t-2}_i[k] = 1$, $V^{t-1}_i[k] = 0$, and $V^{t}_j[k]=1$ for some neighbor $j\in N(i)$, then $j$ is a faulty agent since it does not follow Rule~R4'. In other words, if an agent clears a fault at a certain iteration, and there are one or more neighbors that do not clear the same fault in the subsequent iteration, then those neighbors are faulty agents.

\subsection{Revising Assumptions}\label{sec:revising_assumptions}
To prove the termination method's correctness, we use Assumption~A3 which states that the termination criterion is monotone. However, we can relax this assumption in the fault-tolerant method. We can treat any non-monotone behavior of the local termination criterion status as a fault injection. The fault-tolerant method will then self-correct the status to reflect the change in the non-monotone local termination criterion status. The only required assumption for the fault-tolerant method with a non-monotone local termination criterion is that if the local termination criterion is satisfied for $D +|\mathcal{A}| - 1$ iterations then it remains satisfied for all subsequent iterations, which is weaker than Assumption~A3.

As stated in Assumption~A4, the fault-tolerant method requires having a path between any two agents that does not pass through a faulty agent. This suggests that the best way to prevent faulty termination when using the proposed method is to increase the minimum number of agents in any cut-set by increasing the communication network's connectivity. If the communication network is $k$-connected, i.e., there are $k$ disjoint paths between any pairs of agents, then we are assured that the distributed algorithm will avoid faulty termination with up to $k-1$ faulty agents.

Assumption~A5 isolates faults occurring in the termination method from faults in the distributed optimization algorithm itself. Although we use Assumption~A5 to prove the correctness of the fault-tolerant method, we can relax this assumption. The proofs of Propositions~P6--P8 assume the faulty and faulted agents are not the same agent. However, the only assumption we need in these proofs is that the faulty agent is not the last agent that satisfies the local termination criterion.

\subsection{Asynchronous Distributed Optimization Algorithms}
The proposed termination method is presented in the framework of synchronized distributed optimization where all agents communicate at common intervals. However, the method can be modified to terminate asynchronous distributed optimization algorithms. The modification requires a synchronized time measurement that records the time at which the last local termination is satisfied. This time measurement replaces the termination iteration scalar ($T_i$, for all $i\in\mathcal{A}$). Furthermore, the agents need to store the local solutions with a time stamp for each solution at each local iteration. 

Asynchronous distributed algorithms can be appropriately terminated as follows. When an agent satisfies the local termination criterion, the agent records the time in $T_i$ and shares the termination status ($T_i$ and $V_i$) with the neighboring agents. Agents then use Rule~R2 to update the termination vector ($V_i$) and update the termination time ($T_i$) similar to the termination iteration scalar. Lastly, agents use the first condition in Rule~R3 to terminate, that is, terminate if $\sum_{k\in \mathcal{A}}{V^{t}_i[k]} = |\mathcal{A}|$. The solution of the distributed algorithm in this case is the last local solution with a timestamp equal to or before the time in the local termination status ($T_i$). We note that this extension works for asynchronous distributed optimization termination without faulty agents, but further work is needed to extend the fault-tolerant method to asynchronous settings.

\section{Conclusion} \label{sec7:conclusion}
This paper proposes a method for terminating distributed optimization algorithms solved by multiple computing agents. The proposed method uses three simple rules for each agent that solely rely on local information. Furthermore, we propose a fault-tolerant extension to the method via three additional rules. These rules are symmetrical (same for every agent), and no central entity is required to terminate the distributed optimization algorithm. Further, the proposed method requires minimal knowledge of the global system. The only prior information that we assume the agents know are the number of agents and the diameter of the communication network.

There are several directions for future work that build on the method proposed in this paper. First, we note that proving the method's correctness requires the local termination criterion to be monotone, i.e., once satisfied, each agent's local termination status remains satisfied for all future iterations. However, the local termination criterion may not be monotone for some distributed optimization algorithms. We therefore plan to analyze local termination criterion satisfaction for various distributed optimization algorithms and design alternate criteria that exhibit monotone behavior. Second, our contemporaneous work in~\cite{alkhraijah2023detecting} studies faults in the distributed optimization algorithm's computations. By combining ideas in~\cite{alkhraijah2023detecting} with the results of this paper, we aim to simultaneously address faults in both the distributed optimization algorithm's computations and the termination method. Third, we plan to extend the fault-tolerant method to consider asynchronous distributed optimization as well as dynamic communication networks where connections between agents may vary across iterations.

\section*{Acknowledgment}

The authors would like to thank Rachel Harris for insightful discussions and feedback on the paper.

\bibliographystyle{IEEEtran}
\bibliography{IEEEabrv, references.bib}

\end{document}